\newtheorem{theo}{Theorem}[section]
\newtheorem{coro}[theo]{Corollary}
\newtheorem{lemma}[theo]{Lemma}
\theoremstyle{definition}
\newtheorem{defi}[theo]{Definition}
\theoremstyle{definition}
\newtheorem{rem}[theo]{Remark}
\newtheorem{example}[theo]{Example}
 \newcommand\R{{\mathbb{R}}}
 \newcommand\Z{{\mathbb{Z}}}
 \newcommand\T{{\mathbb{T}}}
 \DeclareMathOperator{\sys}{sys}
 \DeclareMathOperator{\Area}{Area}
 \DeclareMathOperator{\Vol}{Vol}
 \DeclareMathOperator{\length}{length}
\title[A systolic inequality for 2-complexes]{A systolic inequality for 2-complexes of maximal cup-length and systolic area of groups}
   \author{Eugenio Borghini}
   \address{Departamento  de Matem\'atica - IMAS\\
 FCEyN, Universidad de Buenos Aires. Buenos Aires, Argentina.}
\email{eborghini@dm.uba.ar}
\thanks{Researcher of CONICET. Partially supported by grant UBACyT 20020160100081BA}
\subjclass[2010]{53C23, 57M20, 57R95, 57N16}
\keywords{Systolic area, surface groups.}
\begin{document}

   \begin{abstract}
			We extend a systolic inequality of Guth for Riemannian manifolds of maximal $\Z_2$ cup-length to piecewise Riemannian complexes of dimension 2. As a consequence we improve the previous best universal lower bound for the systolic area of groups for a large class of groups, including free abelian and surface groups, most of irreducible 3-manifold groups, non-free Artin groups and Coxeter groups (or more generally), groups containing an element of order 2.
   \end{abstract}

   \maketitle

\section{Introduction}

The systole $\sys(X)$ of a metric space $X$ is a metric invariant of $X$ which consists of the length of a shortest non-contractible loop in $X$. In 1949, Loewner proved that $\Area(\T^{2},g) \geq \frac{\sqrt{3}}{2} \sys(\T^{2},g)^2$ for any Riemannian metric $g$ on the 2-torus $\T^{2}$, a result that can be viewed as a kind of reverse isoperimetric inequality. His student Pu \cite{Pu} extended the inequality to the projective plane and later, Accola \cite{Ac} and Blatter \cite{Bl} independently proved a similar inequality for general orientable Riemannian surfaces. The systematic study of systolic phenomena and invariants was initiated by Gromov in 1983 \cite{Gr1}, who proved the existence of a universal constant $C_n$, depending only on the dimension $n$, such that $\Vol(X,g) \geq C_n \sys(X,g)^{n}$ for all $n$-dimensional \textit{essential} polyhedra $X$ equipped with a piecewise Riemannian metric $g$. Using minimal hypersurfaces, Guth \cite{Gu} established a systolic inequality with a better constant $C_n$ for closed Riemannian manifolds whose $\Z_2$-cohomology has maximal cup-length. This result was subsequently refined and generalized in various directions (see for example \cite{BK, BKP, N}). The main result of the present article is an extension of this systolic inequality in dimension $2$ to complexes of maximal $\Z_2$ cup-length.
\begin{theo}\label{main}
Let $(X,g)$ be a compact connected simplicial complex of dimension $2$ equipped with a piecewise Riemannian metric $g$. Suppose there exist classes $\alpha$, $\beta$ in $H^{1}(X, \Z_2)$ such that $\alpha \cup \beta \neq 0$ in $H^{2}(X, \Z_2)$. Then, $\Area(X, g) \geq \frac{1}{2} \sys(X,g)^{2}$.
\end{theo}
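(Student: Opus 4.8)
\medskip

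The plan is to reduce the inequality to the systolic geometry of closed surfaces, where the sharp constants of Loewner and Pu are available; write $L=\sys(X,g)$ and assume $L>0$. Since $\alpha\cup\beta\ne 0$ in $H^{2}(X,\Z_{2})$ and this group is dual to $H_{2}(X,\Z_{2})$, there is a mod-$2$ homology class $\zeta$ with $\langle\alpha\cup\beta,\zeta\rangle=1$; after a subdivision of $X$ one may represent $\zeta$ by a simplicial cycle $z$. Passing to $K=\supp(z)$ (a $\Z_{2}$-pseudosurface once $z$ is simplicial), which only decreases the area so that $\Area(K,g|_{K})\le\Area(X,g)$, one then \emph{normalizes} $K$: splitting each edge that lies on more than two triangles into parallel copies, and separating the singular vertices, yields a genuine closed surface $\Sigma$ with a normalization map $j\colon\Sigma\to K\hookrightarrow X$ of degree one on each triangle. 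With the pulled-back piecewise Riemannian metric $h$ one has $\Area(\Sigma,h)=\Area(K,g|_{K})\le\Area(X,g)$, the map $j$ is $1$-Lipschitz (pulling simplices apart does not decrease distances), and $j_{*}[\Sigma]=\zeta$. Discarding the components of $\Sigma$ on which $j^{*}\alpha\cup j^{*}\beta$ vanishes, we are left --- keeping the names $\alpha,\beta$ --- with a \emph{connected} closed surface $\Sigma$, classes $\alpha,\beta\in H^{1}(\Sigma,\Z_{2})$ with $\langle\alpha\cup\beta,[\Sigma]\rangle=1$ (so $\Sigma\ne S^{2}$), and a $1$-Lipschitz map $j\colon\Sigma\to X$.

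Next comes the surface inequality. The crucial observation is that a loop $c$ in $\Sigma$ with $\langle\alpha,[c]\rangle=1$ or $\langle\beta,[c]\rangle=1$ is sent by $j$ to a \emph{noncontractible} loop in $X$ of length $\le\length_{h}(c)$, so $\length_{h}(c)\ge L$; hence it suffices to show
\[
\Area(\Sigma,h)\ \ge\ \tfrac12\,\ell_{0}^{2},\qquad \ell_{0}:=\inf\{\,\length_{h}(c)\ :\ \langle\alpha,[c]\rangle=1\ \text{or}\ \langle\beta,[c]\rangle=1\,\}\ \ge\ L.
\]
For this I would follow the classical arguments: after interchanging $\alpha$ and $\beta$ if needed (legitimate since $\alpha\cup\beta=\beta\cup\alpha$ over $\Z_{2}$), take a shortest loop $\gamma$ with $\langle\alpha,[\gamma]\rangle=1$; a shortening/surgery argument makes $\gamma$ a simple closed curve with $\length_{h}(\gamma)=\ell_{0}$, and $[\gamma]\ne0$, so $\gamma$ is nonseparating. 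When $\gamma$ is two-sided, cut $\Sigma$ along it to get a connected surface $\Sigma'$: the level sets of the distance in $\Sigma'$ to a boundary component lie in the class $[\gamma]$, hence are detected by $\alpha$ and have length $\ge\ell_{0}$, so the coarea inequality gives $\Area(\Sigma,h)\ge\Area(\Sigma')\ge w\,\ell_{0}$, with $w$ the width of $\Sigma'$; meanwhile a shortest transversal of $\Sigma'$, closed up by a sub-arc of $\gamma$ of length $\le\tfrac12\ell_{0}$, is a loop crossing $\gamma$ once, which one checks is itself detected by $\alpha$ or $\beta$, whence $\ell_{0}\le w+\tfrac12\ell_{0}$ and so $\Area(\Sigma,h)\ge w\,\ell_{0}\ge\tfrac12\ell_{0}^{2}$. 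The remaining case, in which the only available dual curve is one-sided (so $\alpha^{2}\ne0$, modelled on $\R P^{2}$), is handled by a parallel but easier argument: a regular neighbourhood of $\gamma$ is a M\"obius band whose systole is $\ell_{0}$, and a Pu-type inequality for it (equivalently for the projective plane it spans) gives the stronger bound $\Area(\Sigma,h)\ge\tfrac2\pi\ell_{0}^{2}$.

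The main obstacle is to make the two halves cohere quantitatively. In the reduction one must guarantee that the normalization $K\to\Sigma$ is genuinely length-nondecreasing and that the cup product survives on a single surface component, so that no systolic length leaks away; this is where the collapsing and cutting technology is needed, and also where the hypothesis $\dim X=2$ is used, since in dimension $n$ the same reduction produces a closed $n$-manifold of maximal $\Z_{2}$ cup-length and one would have to invoke Guth's minimal-hypersurface estimate rather than the elementary cut-and-sweepout argument. In the surface step the delicate point is to verify that \emph{every} loop whose length one bounds below is detected by $\alpha$ or $\beta$ --- and not merely by some auxiliary class that might die in $X$ --- which is exactly what dictates the choice of curve to cut along and the precise meaning of $\ell_{0}$; and then to organize the two competing sweepouts (and the non-orientable case via Pu) so that the constant that emerges is exactly $\tfrac12$.
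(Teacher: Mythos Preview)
Your reduction to a closed surface is correct and coincides with the paper's: representing the dual homology class by a surface via a simplex-by-simplex gluing is precisely Lemma~\ref{homology}, and the observation that $\length(j^*\alpha),\length(j^*\beta)\ge\sys(X,g)$ is the same. The difference is that the paper then invokes Nakamura's Theorem~\ref{Nakasurface} as a black box to obtain $\Area(\Sigma,h)\ge\tfrac12\ell_0^{2}$, whereas you attempt to prove this surface inequality directly by a Loewner-type cut-and-sweepout.

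That attempt has a real gap at the step you yourself flag as delicate. After cutting along a shortest $\alpha$-detected simple curve $\gamma$, you assert that a shortest transversal closed up by half of $\gamma$ is again detected by $\alpha$ or $\beta$; this is false in general. On a genus-$2$ surface with symplectic $\Z_2$-basis $a_1,b_1,a_2,b_2$, take $\alpha$ and $\beta$ Poincar\'e dual to $a_1$ and $b_1$ respectively, and choose a metric in which the shortest $\alpha$-detected curve $\gamma$ lies in the class $b_1+b_2$ (legitimate, since $\langle\alpha,b_1+b_2\rangle=a_1\cdot(b_1+b_2)=1$). A transversal $c$ in class $a_2$ crosses $\gamma$ once, as $a_2\cdot(b_1+b_2)=1$, yet $\langle\alpha,a_2\rangle=\langle\beta,a_2\rangle=0$. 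Switching to the other closing arc replaces $[c]$ by $[c]+[\gamma]$, which \emph{is} $\alpha$-detected, but that arc may be the long one of length up to $\ell_0$, and the resulting bound $\ell_0\le w+\ell_0$ is vacuous. Hence the width estimate $w\ge\tfrac12\ell_0$ is unproven and the inequality does not follow from your sketch. (The one-sided case fares no better: a M\"obius neighbourhood of $\gamma$ satisfies no Pu-type inequality giving $\tfrac{2}{\pi}\ell_0^{2}$ without substantial further argument.) Nakamura's actual proof proceeds differently---through a length-minimizing cycle Poincar\'e dual to one of the classes and a local area-of-balls estimate---which is why the paper cites it rather than reproving it.
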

One of our main motivations to study the systolic geometry of $2$-dimensional complexes is its connection with finitely presentable groups. Define the \textit{systolic area} $\sigma(X)$ of a $2$-dimensional complex $X$ as
\[
	\sigma(X) := \inf_g \frac{\Area(X,g)}{\sys(X,g)^2},
\]
the infimum being over all piecewise Riemannian metrics $g$ on $X$. Gromov introduced in \cite{Gr2} the notion of systolic area of a finitely presentable group $G$ as 
\[
	\sigma(G) := \inf_{X} \sigma(X),
\]
where the infimum is taken over the finite 2-dimensional simplicial complexes $X$ with fundamental group isomorphic to $G$. We are interested in understanding how the systolic area of a closed surface $S$ compares to the systolic area of its fundamental group, beyond the obvious inequality $\sigma(\pi_1(S)) \leq \sigma(S)$. A connection between these invariants was revealed in \cite{BPS}, where the authors showed that the systolic area of fundamental groups of surfaces grows asymptotically with the genus $g$ as $\frac{g}{\log(g)^{2}}$ and thus exhibits the same asymptotic behavior as the systolic area of surfaces. Hence, the most optimistic conjecture would be that $\sigma(\pi_1(S)) = \sigma(S)$. If this were true, it would imply that in some sense the most efficient geometric-topological model of a surface group is the surface itself. In \cite{BM} we showed that this is indeed the case for a combinatorial approximation of the systolic area called \textit{simplicial complexity}, which was recently introduced by Babenko, Balacheff and Bulteau \cite{BBB}. Our focus here is on improving the universal lower bound for the systolic area of surface groups (that is, fundamental groups of closed surfaces). Let us briefly review the known estimates for the systolic area of groups before stating our result. Gromov showed that the systolic area of non-free groups is strictly positive: it follows from \cite[Theorem 6.7.A]{Gr1} that $\sigma(G) \geq \frac{1}{10^{4}}$ for non-free groups $G$. This lower bound was considerably improved by the works of Katz, Rudyak and Sabourau \cite{KRS}, Rudyak and Sabourau \cite{RS}, and Katz, Katz, Sabourau, Shnider and Weinberger \cite{KKSSW}, and now it is known that $\sigma(G) \geq \frac{1}{4}$ for non-free groups. In contrast, stronger bounds are available for the systolic area of surfaces. By an inequality of Gromov (see \cite[\S 5.2]{Gr1}), the systolic area of aspherical closed surfaces is at least $\frac{3}{4}$. Combined with Pu's optimal systolic inequality for the projective plane, this leads to the estimate $\sigma(S) \geq \frac{2}{\pi}$ for every non-simply connected closed surface $S$. One might ask if the constant $\frac{2}{\pi}$ is optimal also for the systolic area of surface groups (or even for non-free groups, cf. \cite[Question 1.5]{RS}). As a consequence to our main result we refine the universal lower bound for the systolic area of a large class of groups which includes the surface groups, bringing it closer to $\frac{2}{\pi}$. More precisely, we show that for a group in this class, which we call \textit{surface-like} groups and is defined by a cohomological condition (see Definition \ref{surflike} below), the systolic area is bounded from below by $\frac{1}{2}$. The use of a covering theory argument borrowed from \cite{KKSSW} allows us to establish the following more general result.
\begin{theo}\label{groupsysarea}
Let $G$ be group which contains a surface-like subgroup $T$. Then, $\sigma(G) \geq \frac{1}{2}$.
\end{theo}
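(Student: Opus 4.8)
The plan is to peel off the two infima defining $\sigma(G)$ and reduce, through a covering space, to a single application of Theorem~\ref{main}. By the definition of surface-like (Definition~\ref{surflike}) there are classes $\alpha,\beta\in H^{1}(T;\Z_2)$ with $\alpha\cup\beta\neq 0$ in $H^{2}(T;\Z_2)$. Fix a finite $2$-dimensional simplicial complex $X$ with $\pi_1(X)\cong G$ and a piecewise Riemannian metric $g$ on it; it is enough to prove $\Area(X,g)\ge\frac12\sys(X,g)^{2}$. Let $p\colon\tilde X\to X$ be the covering associated with the subgroup $T\le G=\pi_1(X)$, so that $\pi_1(\tilde X)\cong T$, and endow $\tilde X$ with the pulled-back metric $\tilde g=p^{*}g$, for which $p$ is a local isometry. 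Two elementary observations set the stage. First, since $p_{*}$ is injective on $\pi_1$, a non-contractible loop of $\tilde X$ projects to a non-contractible loop of $X$ of the same length, so $\sys(\tilde X,\tilde g)\ge\sys(X,g)$. Second, the classifying map $\tilde X\to BT$ is an isomorphism on $\pi_1$, hence on $H^{1}(-;\Z_2)$; and since $H_{2}(\tilde X;\Z_2)\to H_{2}(T;\Z_2)$ is surjective (Hopf) while over the field $\Z_2$ one has $H^{2}(-;\Z_2)\cong\Hom_{\Z_2}(H_{2}(-;\Z_2),\Z_2)$, it is injective on $H^{2}(-;\Z_2)$; hence the pullbacks $\bar\alpha,\bar\beta\in H^{1}(\tilde X;\Z_2)$ still satisfy $\bar\alpha\cup\bar\beta\neq 0$ in $H^{2}(\tilde X;\Z_2)$. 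When $T=G$ this already says $\bar\alpha\cup\bar\beta\neq 0$ in $H^{2}(X;\Z_2)$ and Theorem~\ref{main} concludes; the content is the case $[G:T]>1$, where $\tilde X$ need not be compact.

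To return to a compact situation I would pass to a finite subcomplex of $\tilde X$. Using the universal-coefficient isomorphism again, choose $\zeta\in H_{2}(\tilde X;\Z_2)$ with $\langle\bar\alpha\cup\bar\beta,\zeta\rangle=1$, represent it by a finite simplicial $\Z_2$-cycle, and let $Y\subseteq\tilde X$ be the subcomplex spanned by its support; passing to a connected component on which $\bar\alpha\cup\bar\beta$ still evaluates nontrivially, we may take $Y$ finite and connected. Naturality of cup and cap products gives $\langle(\bar\alpha|_{Y})\cup(\bar\beta|_{Y}),\zeta\rangle=1$, so $(\bar\alpha|_{Y})\cup(\bar\beta|_{Y})\neq 0$ in $H^{2}(Y;\Z_2)$, and Theorem~\ref{main} applied to the compact complex $(Y,\tilde g|_{Y})$ yields $\Area(Y,\tilde g|_{Y})\ge\frac12\sys(Y,\tilde g|_{Y})^{2}$.

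It remains to compare the geometry of $Y$ with that of $X$: once we know $\Area(Y,\tilde g|_{Y})\le\Area(X,g)$ and $\sys(Y,\tilde g|_{Y})\ge\sys(X,g)$, chaining the three inequalities gives $\Area(X,g)\ge\frac12\sys(X,g)^{2}$, and taking the infimum over all $X$ and $g$ proves the theorem. This comparison is precisely the covering-theoretic input borrowed from \cite{KKSSW}, and I expect it to be the main obstacle. A careless choice of $Y$ fails on both counts: two $2$-simplices of $Y$ may lie over a single simplex of $X$, inflating the area, and a loop essential in $Y$ may bound a disc in the larger complex $\tilde X$, deflating the systole. The remedy is to use the freedom in the representing cycle and in enlarging $Y$ inside $\tilde X$ so that $p|_{Y}$ is injective on simplices and $Y\hookrightarrow\tilde X$ (hence $Y\to X$) is $\pi_1$-injective — or, as in \cite{KKSSW}, to replace $\tilde g|_{Y}$ by a metric on $Y$ built from $g$ for which both inequalities hold by construction, the point being that the shortest essential loop of $X$ is witnessed by a loop whose lift already lies in such a $Y$. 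The cohomological bookkeeping above is routine; it is this covering reduction that carries the weight.
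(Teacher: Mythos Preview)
Your cohomological setup is correct and matches the paper: pass to the cover $\hat X$ with $\pi_1(\hat X)\cong T$, and use that $\hat X$ is the $2$-skeleton of a $K(T,1)$ (equivalently, Hopf plus universal coefficients) to see that the classes $\alpha,\beta\in H^1(T)$ survive to $\bar\alpha,\bar\beta\in H^1(\hat X)$ with $\bar\alpha\cup\bar\beta\neq 0$.

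The gap is exactly where you place it, and your proposed remedies do not close it. Neither inequality $\Area(Y)\le\Area(X)$ nor $\sys(Y)\ge\sys(X)$ can be arranged in general by tinkering with $Y$. For the area side, there is no reason a representing $2$-cycle in $\hat X$ can be chosen with simplex-injective projection to $X$: if, say, $H_2(X;\Z_2)=0$ while $H_2(\hat X;\Z_2)\neq 0$, every $2$-cycle upstairs must have simplices cancelling in pairs under $p$. For the systole side, making $Y\hookrightarrow\hat X$ $\pi_1$-injective by adding finitely many cells of $\hat X$ is not guaranteed. Your description of the \cite{KKSSW} input is also off: there is no replacement metric on $Y$, and the systolic loop of $X$ need not lift to $\hat X$ at all (it lifts to a loop only when it represents an element of $T$), so it cannot be ``witnessed in $Y$''.

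The paper avoids both obstacles by working \emph{locally} rather than globally; this is the content of Corollary~\ref{cover}. The observation from \cite{KKSSW} actually used is that a covering projection restricts to an isometry on any ball of radius $r<\tfrac12\sys(X,g)$. Now run the proof of Theorem~\ref{main} on $\hat X$ itself (non-compactness is harmless: Lemma~\ref{homology} already hands you a compact surface $S$ and a simplex-injective $h:S\to\hat X$). Since $\bar\alpha,\bar\beta$ live on $\hat X$, any $1$-cycle $\gamma$ in $S$ with $h^*\bar\alpha(\gamma)\neq 0$ has $h_*\gamma$ non-null-homologous in $\hat X$, hence non-contractible there, hence of length $\ge\sys(\hat X,\hat g)\ge\sys(X,g)$. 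Theorem~\ref{Nakasurface} then gives a point $x\in S$ with $\Area B_S(x,r)\ge 2r^{2}$ for all $r<\tfrac12\sys(X,g)$. Because $h$ is area-preserving and $p$ is injective on such balls, $\Area B_S(x,r)\le\Area B_{\hat X}(h(x),r)=\Area B_X\!\big(p(h(x)),r\big)\le\Area(X,g)$. Letting $r\to\tfrac12\sys(X,g)$ finishes. In short: replace the two global comparisons you were unable to prove by the single local fact that small balls inject under $p$; then the auxiliary complex $Y$ and both of your ``remedies'' become unnecessary.
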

Notably, (infinite) fundamental groups of closed irreducible 3-manifolds satisfy this hypothesis by the positive solution of the surface subgroup conjecture by Kahn and Markovic \cite{KM}.
\par Lastly, we remark that if we assume a strengthened version of Gromov's Filling Area Conjecture the inequality from Theorem \ref{groupsysarea} can be promoted to $\sigma(\pi_1(S)) \geq \frac{2}{\pi}$ for \textit{non-orientable} closed surfaces $S$. This follows from adapting an argument of Ivanov and Katz (\cite[Proposition 3.1]{IK}) to our context.

\par \textbf{Acknowledgments}. The author is glad to thank Gabriel Minian for his help and interest during the preparation of this article, Stéphane Sabourau for his helpful comments (especially with regard to Corollary \ref{cover}) and Marcos Cossarini for pointing out an imprecision in a previous version of Remark \ref{FAC}.

\section{Systolic inequality for maximal cup-length 2-complexes}\label{main_section}

Throughout the article, the complexes will be finite and connected unless otherwise stated and the groups will always be finitely presentable. By the cohomology ring of a group $G$ we will understand its cohomology as a discrete group, i.e. the cohomology of an Eilenberg-MacLane space $K(G, 1)$. We will work with reduced (co)homology and the coefficient ring for (co)homology groups will be $\Z_2$. This section is devoted to the proof of our main Theorem \ref{main}. We outline here the argument of the proof, which consists mainly of two parts. First, we realize a conveniently chosen $\Z_2$-homology class of a given $2$-complex $X$ with maximal $\Z_2$ cup-length by a continuous map $f:S \to X$ from a closed surface $S$. Then we derive a lower bound for the systolic area of $S$ by applying a result of Nakamura which refines Guth's systolic inequality (see \cite[Theorem 2.2]{N} and Theorem \ref{Nakasurface} below), and pushforward this inequality via $f$.
\par To carry out the first part of the argument, we need to realize 2-dimensional homology classes as continuous images of surfaces in a controlled way. As it is well-known, Thom showed that a $\Z_2$-homology class of any dimension can be represented as the image of the fundamental class of a closed manifold. We record here an explicit construction for the case of 2-dimensional $\Z_2$-homology classes of simplicial complexes similar to the one described in (\cite[pp. 108-109]{Ha}). Recall that a simplicial map is \textit{non-degenerate} if it preserves the dimension of the simplices.

\begin{lemma}\label{homology}
Let $X$ be a simplicial complex and let $[C] \in H_2(X)$ be a non-trivial homology class. Then, there exists a triangulated closed surface $S$ (possibly non-orientable and not connected) together with a non-degenerate simplicial map $h:S \to X$ such that $h_{*}[S] = [C]$, where $[S] \in H_2(S)$ denotes the fundamental class. Moreover, $h$ does not identify 2-simplices, meaning that $h(\sigma) \neq h(\eta)$ for two distinct 2-simplices $\sigma$, $\eta$ of $S$.
\end{lemma}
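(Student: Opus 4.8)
The plan is to represent $[C]$ by a simplicial $\Z_2$-cycle and assemble $S$ from abstract copies of the $2$-simplices occurring in it, following the familiar construction behind Thom's realization theorem (cf. \cite[pp.~108--109]{Ha}). Write $[C]=[z]$ with $z=\sum_{\sigma\in\mathcal F}\sigma$, where $\mathcal F$ is a finite set of $2$-simplices of $X$; since $[C]\neq 0$ we may take $\mathcal F$ nonempty, and over $\Z_2$ the cycle condition $\partial z=0$ says precisely that every edge of $X$ is a face of an even number of members of $\mathcal F$. Let $\widetilde S=\bigsqcup_{\sigma\in\mathcal F}\Delta_\sigma$ be the disjoint union of abstract $2$-simplices $\Delta_\sigma$, each identified simplicially with $\sigma$, and let $\widetilde h\colon\widetilde S\to X$ be the tautological non-degenerate simplicial map taking $\Delta_\sigma$ isomorphically onto $\sigma$. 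For every edge $e$ of $X$ the set of edges of $\widetilde S$ lying over $e$ has even cardinality; I fix a perfect matching on each such set and glue matched edges by the unique simplicial identification compatible with $\widetilde h$. Performing all these identifications produces a $2$-complex $Y$ and an induced non-degenerate simplicial map $h_0\colon Y\to X$; the $2$-simplices of $Y$ are still exactly the $\Delta_\sigma$ with $\sigma\in\mathcal F$, so $h_0$ sends distinct $2$-simplices of $Y$ to distinct $2$-simplices of $X$.

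The next step is to see that $Y$ is a closed surface after one small repair. By the matching, each edge of $Y$ is a face of precisely two triangles, so for every vertex $v$ the link $\lk(v)$ in $Y$ is a disjoint union of circles, and $|Y|$ is a surface except possibly at those $v$ whose link is disconnected. The standard cure is to split each such $v$ into one vertex per circle component of $\lk(v)$, redistributing the incident edges and triangles; this changes neither edges nor triangles, so the induced map $h\colon S\to X$ on the resulting complex $S$ is again non-degenerate simplicial and injective on $2$-simplices, and $S$ is now a closed surface, in general disconnected and non-orientable. Two distinct $2$-simplices of $X$ share at most one edge, hence distinct triangles of $S$ are glued along at most one edge and no two of them carry the same vertex set; the only remaining obstruction to $S$ being a genuine simplicial complex is a pair of edges of $S$ with the same endpoints, which can be cleared by a compatible stellar subdivision of $S$ and of $X$ along the offending edge. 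I expect this passage from the pseudo-surface $Y$ to a bona fide triangulated surface — checking that the vertex links are circles and tidying up the $1$-skeleton — to be the only genuinely delicate point; the rest is routine.

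With $S$ in hand the two assertions are formal. Since $S$ is a closed triangulated surface, the sum $[S]$ of all its $2$-simplices is a $\Z_2$-cycle — its boundary vanishes because each edge lies in exactly two triangles — and represents the fundamental class on each component. Pushing forward and using $h(\Delta_\sigma)=\sigma$ gives $h_*[S]=\sum_{\sigma\in\mathcal F}\sigma=z$, so $h_*[S]=[C]$. Finally, the $2$-simplices of $S$ correspond bijectively, via $h$, to the subfamily $\mathcal F$ of $2$-simplices of $X$, whence $h(\sigma)\neq h(\eta)$ for distinct $2$-simplices $\sigma,\eta$ of $S$; this survives the stellar subdivisions above, each of which replaces the two triangles incident to an edge by the two sub-triangles flanking the new vertex, again mapped to distinct simplices.
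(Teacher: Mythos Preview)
Your argument is essentially the same as the paper's: represent $[C]$ by a $\Z_2$-cycle, take one abstract $2$-simplex for each simplex in its support, pair up and glue the boundary edges using the cycle condition, and read off the map $h$. The paper's version is terser---it simply asserts that the resulting quotient is a closed surface, without your explicit vertex-splitting repair or the discussion of double edges---but the construction and the verification that $h_*[S]=[C]$ are the same.
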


Notice in particular that for a piecewise Riemannian 2-complex $(X,g)$, the map $h:S \to X$ from the statement preserves lengths and areas if the surface $S$ is endowed with the piecewise Riemannian pullback metric $h^{*}(g)$.

\begin{proof}
Take a 2-cycle $Z = \sum_{i} \sigma_i$ in $C_2(X)$ representing the homology class $[C]$ and form a disjoint union of 2-simplices $\tilde{\sigma}_i$, one for each $\sigma_i$ in the support of $Z$. Since the algebraic boundary of $Z$ is trivial, the edges of the simplices $\sigma_i$ cancel in pairs. Choose a maximal set of such canceling pairs and identify the edges of $\tilde{\sigma_i}$ accordingly. It is clear that the quotient space obtained from $\coprod_{i} \tilde{\sigma_i}$ by performing these identifications is a closed surface $S$ and that it gives rise to a simplicial map $h:S \to X$ with the desired properties.
\end{proof}

\begin{rem}
In general, for a $\Z_2$-homology class of dimension $n \geq 2$ the construction from the proof gives a realization by a pseudo-manifold whose singularities are of codimension at least $3$ (cf. \cite[p. 109]{Ha}).
\end{rem}

We reproduce next the precise result from \cite{N} that we will need. Notice that the original result applies to closed manifolds of any dimension, but we state it only for surfaces which is our case of interest. Recall that given a piecewise Riemannian complex $(X,g)$ and a non-trivial homology class $\gamma \in H_{1}(X)$, we denote by $\length(\gamma)$ the infimum of the lengths of 1-cycles representing $\gamma$. Following Guth (see \cite{Gu}), for a non-trivial cohomology class $\alpha$ we define its length as
\[
	\length(\alpha) := \inf \{ \length(\gamma): \gamma \in H_1(X), \, \alpha(\gamma) \neq 0\}.
\]
Observe that $\length(\alpha) \geq \sys(X,g)$ for any non-trivial $\alpha \in H^{1}(X)$.

\begin{theo}\label{Nakasurface}(cf. \cite[Theorem 2.2]{N})
Let $(S, g)$ be a closed Riemannian surface. Let $\alpha$, $\beta \in H^{1}(S)$ be not necessarily distinct classes such that $\alpha \cup \beta \neq 0$ in $H^{2}(S)$ and let $2R := \min \{\length(\alpha), \length(\beta)\} > 0$. Then, there exists $x \in S$ such that for any $r \in (0, R)$,
\[
	\Area B(x, r) \geq \frac{(2r)^{2}}{2}.
\]
\end{theo}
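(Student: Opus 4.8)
The plan is to follow Guth's local-to-global scheme, in the sharpened form due to Nakamura: reduce the two-dimensional assertion to a statement about a $1$-dimensional subset of $S$, and then estimate the areas of metric balls with the coarea inequality. The first step trades the class $\beta$ for geometry. Let $\Gamma\subset S$ be a length-minimizing $1$-cycle representing the Poincar\'e dual $PD(\beta)\in H_1(S)$; such a $\Gamma$ exists by a standard compactness argument and is a disjoint union of simple closed geodesics. Since $\langle\alpha,[\Gamma]\rangle=\langle\alpha,PD(\beta)\rangle=\langle\alpha\cup\beta,[S]\rangle\neq0$, writing $[\Gamma]=\sum_i[C_i]$ over the components of $\Gamma$ and using linearity of $\alpha$ shows that at least one component $C$ satisfies $\langle\alpha,[C]\rangle\neq0$. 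Being a $1$-cycle pairing nontrivially with $\alpha$, this $C$ has $\length(C)\geq\length(\alpha)\geq 2R$, and it is an embedded closed geodesic. We are thus reduced to the following one-dimensional input: if $C$ is a simple closed geodesic of length at least $2R$ in $(S,g)$, then some suitably chosen point $x\in S$ satisfies $\Area B(x,r)\geq 2r^2$ for every $r\in(0,R)$.

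For this reduced statement I would begin with the coarea inequality for the $1$-Lipschitz function $y\mapsto d(x,y)$:
\[
	\Area B(x,r)\ \geq\ \int_0^r \mathcal H^1\big(\{\,y: d(x,y)=s\,\}\big)\,ds .
\]
Hence it suffices to find a point $x$ whose distance spheres are long in the precise sense $\mathcal H^1(\{d(x,\cdot)=s\})\geq 4s$ for almost every $s\in(0,R)$; then $\Area B(x,r)\geq\int_0^r 4s\,ds=2r^2$, as desired. The role of the long geodesic $C$ is to force the distance spheres around an appropriate point to be long: as $C$ is an embedded geodesic of length at least $2R$, one has $\length(C\cap B(x,s))\geq 2s$ for $x\in C$ and $s<R$, since the two arcs of $C$ of arclength $s$ emanating from $x$ stay inside the ball; this ``monotonicity along $C$'' is the input one feeds into a slicing estimate to bound the length of the spheres $\{d(x,\cdot)=s\}$ from below. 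Choosing $x$ correctly is itself part of the difficulty -- a point sitting at a short ``neck'' would make nearby spheres too short -- and it is here that the hypothesis $\alpha\cup\beta\neq0$ is really used, through a Lusternik--Schnirelmann--type covering argument: $S$ cannot be written as the union of an open set on which $\alpha$ restricts to zero and one on which $\beta$ restricts to zero, and from this one deduces that not every point of $S$ can be a bad neck point, so that a point $x$ with long distance spheres exists.

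The principal obstacle is exactly this one-dimensional estimate with the sharp constant: simultaneously selecting the good point $x$ and establishing $\mathcal H^1(\{d(x,\cdot)=s\})\geq 4s$ for a.e.\ $s<R$. A crude tubular-neighborhood estimate around $C$ loses a multiplicative factor, and on positively curved portions of $S$ the distance tubes around $C$ are actually shorter than $C$ itself, so the argument has to be arranged to survive this -- this is the technical core. One also needs the usual care that the level sets $\{d(x,\cdot)=s\}$ are well behaved for a.e.\ $s$ (Morse--Sard for the distance function off the cut locus, or a smoothing), and that $g$ is only piecewise Riemannian, handled by approximation. In fact the assertion is precisely the surface case of \cite[Theorem 2.2]{N}, whose general $n$-dimensional version iterates the above reduction $n$ times, at each stage replacing the shortest closed geodesic by an Almgren--Pitts min-max minimal hypersurface in order to retain the monotonicity input; in the write-up I would simply cite that result, with the sketch above indicating how one recovers it.
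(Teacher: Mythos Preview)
The paper does not prove this theorem at all: it is stated explicitly as a quotation of \cite[Theorem~2.2]{N} and then used as a black box in the proof of Theorem~\ref{main}. Your closing sentence --- that in the write-up you would simply cite the result --- is exactly what the paper does, so in that sense your proposal matches.

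As for the sketch itself: the broad outline (pass to a length-minimizing cycle $\Gamma$ dual to $\beta$, extract a component $C$ with $\alpha([C])\neq 0$ so that $\length(C)\ge 2R$, then estimate ball areas via coarea) is the right shape of the Guth--Nakamura argument in dimension~$2$. You are also honest that obtaining the sharp constant is the technical core and that your outline does not supply it. One small correction: once the component $C$ has been extracted, the cup-product hypothesis has already been fully consumed; the Lusternik--Schnirelmann covering argument you invoke to ``choose $x$ correctly'' is not how Nakamura proceeds, and in fact does not obviously interface with the one-dimensional reduction you have already made. In Nakamura's proof the point $x$ is taken on $C$, and the length-of-spheres estimate is obtained by a more delicate analysis exploiting the minimality of $C$ rather than a second appeal to $\alpha\cup\beta\neq 0$. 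None of this matters for the present paper, since the theorem is cited, not proved.
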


Keeping the notations from the previous statement, since $2R \geq \sys(S,g)$ it follows that $\Area(S,g)$ is greater than or equal to $\frac{\sys(S,g)^{2}}{2}$. Thus, this result does not improve the best known systolic inequalities for surfaces. Rather, its power lies in the fact that it provides a lower bound for the area of a surface only from the existence of two ``long'' cohomology classes with non-trivial cup product regardless of the size of the systole.

\begin{proof}[Proof of Theorem~\ref{main}]
Since by hypothesis there are classes $\alpha$, $\beta \in H^{1}(X)$ such that $\alpha \cup \beta \neq 0$, there exists a homology class $[C] \in H_{2}(X)$ with $\alpha \cup \beta [C] \neq 0$. By applying Lemma \ref{homology} to the class $[C]$, we obtain a triangulated closed surface $S$ together with a simplicial map $h:S \to X$ representing $[C]$. Endow $S$ with the pullback metric $h^{*}(g)$, where $g$ is the piecewise Riemannian metric on $X$. By the naturality of the cup product, we have
\[
	h^{*}(\alpha) \cup h^{*}(\beta) [S] = (\alpha \cup \beta) h_{*}[S] = \alpha \cup \beta [C] \neq 0.
\]
If $S$ is not connected, the previous computation implies that for some connected component of $S$ the corresponding components of $h^{*}(\alpha)$ and $h^{*}(\beta)$ have non-trivial cup product. By a slight abuse of notation, we will still call $S$ such a component. Now both the length of $h^{*}(\alpha)$ and $h^{*}(\beta)$ are at least $\sys(X,g)$. Indeed, if $\gamma$ is a 1-cycle in $S$ such that $h^{*}(\alpha) \gamma \neq 0$, since $h^{*}(\alpha) \gamma = \alpha (h_{*}\gamma)$ and $h$ is length preserving it follows that $\length(h^{*}(\alpha)) \geq \length(\alpha) \geq \sys(X,g)$. Analogously, $\length(h^{*}(\beta)) \geq \sys(X,g)$. It is not difficult to show that Theorem \ref{Nakasurface} applies also for surfaces with a piecewise smooth Riemannian metric, since one may approximate up to an arbitrarily small error such a metric by a smooth one. Hence, by Theorem \ref{Nakasurface} there is a point $x$ in $S$ such that for every $r \in \left(0, \frac{\sys(X, g)}{2}\right)$
\[
	\Area B_{S}(x, r) \geq \frac{(2r)^{2}}{2},
\]
where $B_S(x, r)$ stands for the ball of radius $r$ centered at the point $x \in S$. Since $h$ preserves areas we have $\Area(S, h^{*}(g)) \leq \Area(X,g)$, which implies that $\Area(X, g) \geq \frac{1}{2} \sys(X,g)^{2}$ as desired.
\end{proof}

In fact, notice that it follows from the argument in the proof that there exists a point $y \in X$ (namely, $h(x)$) such that $\Area B_{X}(y,r) \geq \frac{(2r)^{2}}{2}$ for every $r \in \left(0, \frac{\sys(X, g)}{2}\right)$. Moreover, the same conclusion can be obtained also for the area of a ball in non-compact 2-complexes, with the caveat that the systole of a non-compact space might equal zero. This local reformulation of Theorem \ref{main} is especially relevant when dealing with covering spaces. As it was observed in \cite{KKSSW}, covering projections of piecewise Riemannian complexes map injectively balls of radius less than half the systole of the base space. Since on the other hand the systole of a non-simply connected covering space is clearly greater than or equal to the systole of the base space, we obtain the following corollary.

\begin{coro}\label{cover}
Let $(X,g)$ be a compact connected complex of dimension $2$ equipped with a piecewise Riemannian metric $g$ and let $(\hat{X},\hat{g})$ be a covering of $X$. Suppose that there exist classes $\alpha$, $\beta$ in $H^{1}(\hat{X}, \Z_2)$ such that $\alpha \cup \beta \neq 0$ in $H^{2}(\hat{X}, \Z_2)$ and let $2R := \min\{\length(\alpha), \length(\beta)\}$. Then, there exists $x \in X$ such that $\Area B(x, r) \geq \frac{(2r)^{2}}{2}$ for all $r \in (0, R)$. In particular $\Area(X,g) \geq \frac{1}{2}\sys(X,g)^{2}$.
\end{coro}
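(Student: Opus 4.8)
The plan is to replay the argument behind Theorem~\ref{main} on the covering space $\hat X$ itself --- equipped with the pullback metric $\hat g = p^{*}g$, where $p:\hat X\to X$ denotes the covering projection --- and then to transport the resulting lower bound for the areas of small balls down to $X$ along $p$. The one way in which $\hat X$ departs from the hypotheses of Theorem~\ref{main} is that it may fail to be compact, but this is harmless, because every $2$-cycle has finite support.

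Concretely, I would first choose $[C]\in H_{2}(\hat X)$ with $(\alpha\cup\beta)[C]\neq 0$ and feed it to Lemma~\ref{homology}; the lemma holds for an arbitrary simplicial complex, in particular for $\hat X$ with its natural triangulation, and yields a triangulated closed surface $S$ together with a non-degenerate simplicial map $h:S\to\hat X$ which does not identify $2$-simplices and satisfies $h_{*}[S]=[C]$. Give $S$ the pullback metric $h^{*}\hat g$. By naturality of the cup product $h^{*}\alpha\cup h^{*}\beta\,[S]=(\alpha\cup\beta)[C]\neq 0$, so after discarding the remaining components I may assume $S$ connected with $h^{*}\alpha\cup h^{*}\beta\neq 0$. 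As in the proof of Theorem~\ref{main}, the relation $h^{*}\alpha(\gamma)=\alpha(h_{*}\gamma)$ and the fact that $h$ preserves lengths force $\min\{\length(h^{*}\alpha),\length(h^{*}\beta)\}\ge 2R$. Applying Theorem~\ref{Nakasurface} to $(S,h^{*}\hat g)$ with the pair $h^{*}\alpha,h^{*}\beta$ then produces a point $x'\in S$ with $\Area B_{S}(x',r)\ge\frac{(2r)^{2}}{2}$ for all $r\in(0,R)$; and the bookkeeping used at the end of the proof of Theorem~\ref{main} --- each $2$-simplex of $S$ maps isometrically onto a $2$-simplex of $\hat X$, distinct $2$-simplices of $S$ have distinct images, and $h(B_{S}(x',r))\subseteq B_{\hat X}(h(x'),r)$ --- upgrades this to $\Area B_{\hat X}(\hat x,r)\ge\frac{(2r)^{2}}{2}$ for all $r\in(0,R)$, with $\hat x:=h(x')$. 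This is precisely the local form of Theorem~\ref{main} for the (possibly non-compact) complex $\hat X$.

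It then remains to pass from $\hat X$ down to $X$, and this is the delicate step. Set $x:=p(\hat x)$. If two distinct points of a ball $B_{\hat X}(\hat x,r)$ had the same image under $p$, then joining them by a path inside the ball and projecting would produce a non-contractible loop in $X$ of length $<2r$; hence --- this is the observation from \cite{KKSSW} --- $p$ is injective, and therefore, being a local isometry, restricts to an isometry from $B_{\hat X}(\hat x,r)$ onto $B_{X}(x,r)$, provided $2r\le\sys(X,g)$. Since the projection of a non-contractible loop is non-contractible of the same length, $\sys(\hat X,\hat g)\ge\sys(X,g)$, so $2R\ge\sys(\hat X,\hat g)\ge\sys(X,g)$ and every such radius already lies in $(0,R)$. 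For $r\le\tfrac12\sys(X,g)$ we conclude $\Area B_{X}(x,r)=\Area B_{\hat X}(\hat x,r)\ge\frac{(2r)^{2}}{2}$, and letting $r\uparrow\tfrac12\sys(X,g)$ gives $\Area(X,g)\ge\tfrac12\sys(X,g)^{2}$.

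I expect the only real obstacle to be exactly this transfer. Everything taking place inside $\hat X$ is a verbatim rerun of the proof of Theorem~\ref{main}, so the work lies in verifying that the covering projection genuinely restricts to an isometry on the relevant balls --- which is what both makes the covering hypothesis usable and pins down the admissible range of radii.
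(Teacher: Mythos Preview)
Your argument is essentially the paper's own: run the proof of Theorem~\ref{main} on $(\hat X,\hat g)$ to get a point $\hat x$ with $\Area B_{\hat X}(\hat x,r)\ge\frac{(2r)^2}{2}$ for $r\in(0,R)$, and then use the observation from \cite{KKSSW} that the covering projection is injective on balls of radius below $\tfrac12\sys(X,g)$ to push the estimate down to $X$. The details you supply (Lemma~\ref{homology} applies to the non-compact $\hat X$ because cycles have finite support; $h$ is area-preserving and injective on $2$-simplices; $p$ carries $B_{\hat X}(\hat x,r)$ isometrically onto $B_X(x,r)$ for small $r$) are exactly the ones the paper leaves implicit.

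One point worth flagging: your transfer step, like the paper's, only yields $\Area B_X(x,r)\ge\frac{(2r)^2}{2}$ for $r\le\tfrac12\sys(X,g)$, not for the full interval $(0,R)$ appearing in the statement. You are right that $\tfrac12\sys(X,g)\le R$, so the radii you handle lie inside $(0,R)$, but the reverse inclusion need not hold, and in fact the stronger claim can fail (take $X$ a thin flat torus and $\hat X$ a large cyclic cover: $R$ scales with the degree while $\Area(X)$ does not). This does not affect the ``In particular'' clause or any later use of the corollary, so your proof delivers everything the paper actually needs.
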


\section{Systolic area of groups}\label{groups}

In this section we derive an inequality for the systolic area of a class of groups which contains the surface groups. We start by introducing the notion of \textit{surface-like} groups.

\begin{defi}\label{surflike}
Let $G$ be a group. We say that $G$ is \textit{surface-like} if there exist classes $\alpha$, $\beta$ in $H^{1}(G)$ such that $\alpha \cup \beta \neq 0$ in $H^{2}(G)$.
\end{defi}

\begin{example}\label{surflike_ex}
As the name suggests, all surface groups are surface-like by Poincaré Duality. But many other groups are surface-like: free abelian groups of rank $\geq 2$, elementary abelian $2$-groups, direct and free products of a group with a surface-like group are some other examples.
\end{example}

We are now ready to prove Theorem \ref{groupsysarea}.

\begin{proof}[Proof of Theorem~\ref{groupsysarea}]
Let $(X,g)$ be a piecewise Riemannian 2-complex with fundamental group $G$ and let $(\hat{X},\hat{g})$ be the covering of $X$ with fundamental group $T$ endowed with the pullback metric $\hat{g}$. By Corollary \ref{cover}, the proof reduces to verifying that under the hypothesis on its fundamental group, $\hat{X}$ must have maximal $\Z_2$ cup-length. Notice that the complex $\hat{X}$ includes as the 2-skeleton of an aspherical CW-complex $K$ (possibly infinite dimensional). Thus $K$ is an Eilenberg-MacLane space $K(T,1)$ and so its cohomology ring $H^{*}(K)$ is isomorphic to the cohomology ring $H^{*}(T)$ of $T$. By construction, the inclusion $\hat{X} \hookrightarrow K$ induces an isomorphism $H^{1}(K) = H^{1}(T) \to H^{1}(\hat{X})$ and a monomorphism $H^2(K) = H^{2}(T) \to H^{2}(\hat{X})$. Since $T$ is surface-like, this implies that $\hat{X}$ has maximal $\Z_2$ cup-length.
\end{proof}

Besides the groups listed in Example \ref{surflike_ex}, the bound from Theorem \ref{groupsysarea} applies among others to non-free Artin groups, groups containing an element of order 2 (in particular, to Coxeter groups) and infinite fundamental groups of closed irreducible 3-manifolds. Indeed, Artin groups contain a copy of $\Z \oplus \Z$ unless they are free, while groups with an element of order 2 contain a copy of $\Z_2$. As for the irreducible 3-manifold groups, recall that the surface subgroup conjecture states that every closed, irreducible 3-manifold with infinite fundamental group contains an immersed $\pi_1$-injective closed surface. This conjecture was settled in the affirmative by Kahn and Markovic in \cite{KM} and thus, by Theorem \ref{groupsysarea} the systolic area of infinite fundamental groups of closed irreducible 3-manifolds also admits $\frac{1}{2}$ as a lower bound.

\begin{rem}\label{FAC}
As it was announced in the introduction, the inequality of Theorem \ref{groupsysarea} can be improved for fundamental groups of non-orientable surfaces if we assume a strengthened version of Gromov's Filling Area Conjecture. A Riemannian manifold $(M^{n+1}, g_M)$ is an \textit{isometric filling} of a Riemannian manifold $(N^{n},g_N)$ if $\partial M = N$ and the restriction to $N \times N$ of the distance function $d_{g_M}$ determined by $g_M$ coincides with $d_{g_N}$. The Filling Area Conjecture states that the minimum area among orientable isometric fillings of the circle equipped with its standard Riemannian metric of length $2\ell$ is attained at the standard hemisphere of area $\frac{2}{\pi} \ell^{2}$.
\par Let $(X,g)$ be a piecewise Riemannian 2-complex with fundamental group isomorphic to the fundamental group of a non-orientable surface and suppose that $h:S \to X$ represents the relevant homology class of $H_2(X)$ with $S$ a connected surface. By the naturality of the exact sequence of the universal coefficient theorem, $S$ must be non-orientable. Now we proceed as in the proof of \cite[Proposition 3.1]{IK}. Take the shortest loop $\gamma \in S$ such that $h_{*}[\gamma]$ is non-trivial in $\pi_1(X)$ and open up $S$ along $\gamma$. The resulting Riemannian surface $(\Sigma, \tilde{g})$ is an isometric filling (possibly non-orientable) of a circle of twice the length of $\gamma$ and of the same area as $S$. If we assume that the conclusion of Filling Area Conjecture holds (also for non-orientable isometric fillings), we have $\Area(\Sigma, \tilde{g}) \geq \frac{2}{\pi} (\length(\gamma))^{2} \geq \frac{2}{\pi}(\sys(X,g))^{2}$. Hence, since $\Area(X,g) \geq \Area(\Sigma,\tilde{g})$ we conclude that $\sigma(X, g) \geq \frac{2}{\pi}$. In particular, this would imply that $\sigma(\Z_2) = \frac{2}{\pi}$.
\end{rem}

\end{document}